\DeclareMathOperator{\Var}{\textrm{Var}}
\newtheorem{theorem}{Theorem}
\begin{document}
\title{Bishop's (up)crossing inequality and lower semicomputable random reals revisited}
\author{Mikhail Andreev\thanks{CRITEO, \texttt{mikhail.andreev.57@gmail.com}}, Alexander Shen\thanks{LIRMM, Univ Montpellier, CNRS, Montpellier, France, \thanks{sasha.shen@gmail.com, alexander.shen@lirmm.fr}. Supported by ANR-21-CE48-0023 FLITTLA and  ANR-24-CE48-3758 CADO grants. Part of the work was done while visiting Neapolis University, Paphos, Cyprus}}

\maketitle

\begin{abstract}
In this paper we provide an easy proof of Barmpalias--Lewis-Pye result saying that all computable increasing sequences converging to random reals converge at the same speed by noting that it easily follows from Bishop's upcrossing inequality. We also provide a simple derivation of this inequality.
\end{abstract}
\bigskip

\begin{flushright}
\emph{To the memory of Vladimir V'yugin}
\end{flushright}

\section{Crossing inequalities}

We start by a symmetric version of Bishop's upcrossing inequality~\cite{bishop1966} (so we name it `crossing inequality' instead of `upcrossing' or `downcrossing' one).

Consider the following ``anti-slalom'' game. A player starts at some point $(x,y)$ and moves to the right. She wants to \emph{miss} as many gates (vertical segments $[m_i,M_i]$ located on the vertical line with first coordinate $x_i$) as possible. The restriction is that the slope of her trajectory should be between $\alpha$ and $\beta$ all the time.
\medskip

\begin{center}
\includegraphics{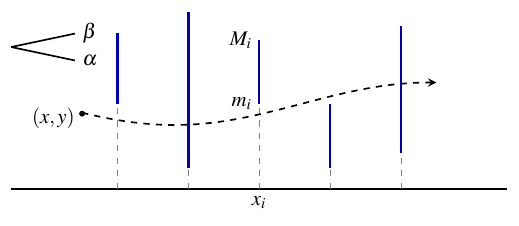}
\end{center}

The game is called \emph{anti-slalom} since the player wants to miss the gates as much as possible. If there were no restrictions on the slope ($\alpha=-\infty$, $\beta=\infty$), she could avoid all the gates. The smaller is the interval $[\alpha,\beta]$, the more restricted is the movement. When the interval is very small, the player essentially moves along some straight line, and crosses all the gates that intersect this straight line.

Fix the positions of the gates (i.e., $x_i$, $m_i$ and $M_i)$ as well as $\alpha<\beta$. Define the function $t(x,y)$ that is the minimal number of crossings if player starts at the point $(x,y)$.  Then consider the function $T(x)=\max_y t(x,y)$.

For example, if $x$ is greater than all $x_i$, we have $T(x)=0$ since there are no gates on the right of $x_i$. We also have $T(x)=0$ if $x$ is much smaller than all $x_i$: if we start far away, small changes in the slope (that should be between~$\alpha$ and $\beta$) are enough to avoid all the gates. So $T(x)$ is a function with finite support (and integer values).

\begin{theorem}[Crossing inequality]\label{cross}
\[
 \int_{\mathbb{R}} T(x)\, dx \le \frac{1}{\beta-\alpha}\sum_i (M_i-m_i).
\]
\end{theorem}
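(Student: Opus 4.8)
The plan is to track how the value function behaves as the starting vertical line sweeps across the plane, and to read off the inequality from a conservation (mass-balance) argument rather than from any per-gate estimate. First I would record the dynamic-programming structure of $t$. Inside a gate-free vertical strip the principle of optimality gives, for $x'<x$ in the strip,
\[
 t(x',y)=\min\{\,t(x,h)\ :\ h\in[y+\alpha(x-x'),\,y+\beta(x-x')]\,\},
\]
so the superlevel set $\{y:t(x',y)\ge v\}$ is obtained from $\{y:t(x,y)\ge v\}$ by \emph{erosion}: one keeps exactly those $y$ whose entire reachable segment, of length $(\beta-\alpha)(x-x')$, lies inside it. Crossing a gate line $x_i$ leftward adds $\mathbf{1}_{[m_i,M_i]}$ to the value function. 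Both operations preserve the property that every superlevel set is a finite union of intervals, and together they describe the evolution of $t(x,\cdot)$ completely.

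Next I would introduce the mass $m(x)=\int_{\mathbb R} t(x,y)\,dy=\sum_{v\ge1}\bigl|\{y:t(x,y)\ge v\}\bigr|$ and watch its two mechanisms of change as $x$ increases from $-\infty$ to $+\infty$. Passing a gate line $x_i$ rightward removes that gate and drops $m$ by a jump of exactly $M_i-m_i$. In a gate-free strip, eroding a union of $c_v(x)$ disjoint intervals by an infinitesimal width shortens each of them by the same amount, so $m$ grows at the continuous rate $(\beta-\alpha)\,C(x)$, where $C(x)=\sum_{v\ge1}c_v(x)$ is the total number of connected components of all the superlevel sets. Since $t(x,\cdot)\equiv0$ for $|x|$ large, $m$ vanishes at both ends of the sweep, and balancing the total continuous gain against the total of the downward jumps yields $(\beta-\alpha)\int_{\mathbb R}C(x)\,dx=\sum_i (M_i-m_i)$.

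Finally I would compare $C$ with $T$. Because the superlevel sets are nested, $\{y:t(x,y)\ge v\}$ is nonempty for every $v$ up to $\max_y t(x,y)=T(x)$, and each nonempty set contributes at least one component; hence $C(x)\ge T(x)$ pointwise. Combining the two facts gives $\int_{\mathbb R} T\le\int_{\mathbb R} C=\frac{1}{\beta-\alpha}\sum_i(M_i-m_i)$, which is exactly the claim, and the surplus $C-T$ (extra components at some level) is precisely the reason an inequality, rather than the single-gate equality, appears in general. The step I expect to be the main obstacle is the rigorous justification of the continuous erosion rate: one must verify that $m$ is piecewise smooth with derivative $(\beta-\alpha)\,C(x)$, that the finitely many $x$ at which a component of some superlevel set shrinks to a point (or at which components merge or split) form a negligible set that does not affect the integral, and that the reachable-segment description of erosion holds uniformly. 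Once this differential identity is secured, everything else is bookkeeping.
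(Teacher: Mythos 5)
Your proposal is correct and is essentially the paper's own proof: the same sweeping-line dynamic-programming recursion, the same observation that superlevel sets (the paper's ``peaks'') erode at rate $\beta-\alpha$ per component in gate-free strips, and the same mass-balance between the continuous change of $\int_y t(x,y)\,dy$ and the jumps of size $M_i-m_i$ at the gates. The only differences are cosmetic --- you sweep in the opposite direction and sharpen the paper's bound ``decrease rate at least $T(x)(\beta-\alpha)$'' into the exact identity $(\beta-\alpha)\int C = \sum_i(M_i-m_i)$ followed by $C\ge T$, which also neatly explains when the inequality is strict.
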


For example, if there is only one gate, then $t(x,y)=1$ inside the triangle (see the picture) and $0$ elsewhere, so $T$ equals $1$ on an interval of length $l/(\beta-\alpha)$ where $l$ is the length of the gate, and the inequality we claim becomes an equality.
\begin{center}
\includegraphics{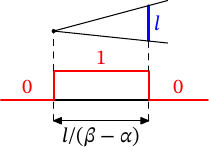}
\end{center}

If there are two gates, the situation is more complicated; for example, on the left picture the inequality is still an equality (since two marked intervals are equal), while on the right picture the inequality is strict.
\begin{center}
\includegraphics[width=0.46\textwidth]{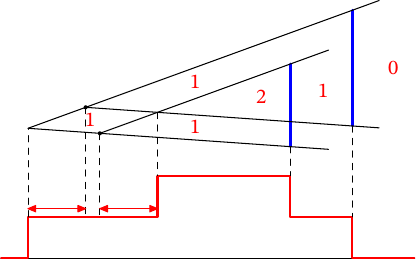}\quad
\includegraphics[width=0.33\textwidth]{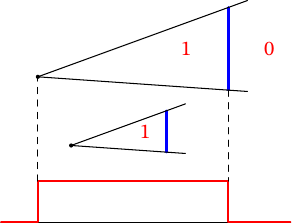}
\end{center}

\begin{proof}[Proof of Theorem~\ref{cross}]
To understand the behavior of the function $t$, we use the moving line approach. For every $x$ we consider the function $y\mapsto t(x,y)$ and look how this function changes when $x$ decreases.

\begin{itemize}
\item When $x$ is greater than all $x_i$ (positions of all gates), the function is zero everywhere.
\item When $x$ crosses the coordinate $x_i$ of $i$th gate (changing from $x_i+0$ to $x_i-0$, so to say), the indicator function of the interval $[m_i,M_i]$ is added (new gate crossing).
\item When $x$ decreases from $x+\Delta x$ to $x$ not crossing any of $x_i$, the function $t(x,y)$ can be computed as follows:
\[
t(x,y)=\min_{\Delta y \in [\alpha\Delta x,\beta\Delta x]} t(x+\Delta x,y+\Delta y).
\]
Indeed, the player could move (without crossing any gates) from $(x,y)$ to the point $(x+\Delta x,y+\Delta y)$ for arbitrary $\Delta y \in [\alpha\Delta x,\beta\Delta x]$. She needs to choose the point that minimizes the number of subsequent crossings.
\end{itemize}

The last formula means that the widths of the peaks of the function $t(x,\cdot)$ (marked by arrows in the diagram)
\begin{figure}[h]
$$\text{\includegraphics{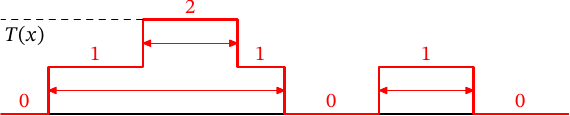}}$$
\end{figure}
decrease with speed $\beta-\alpha$ when $x$ decreases (and does not cross any $x_i$). Therefore, the integral $I(x)=\int_y t(x,y)\,dy$ of this function decreases with speed at least $T(x)(\beta-\alpha)$. 

The total decrease of the integral cannot exceed the sum of its increases (that happen due to gates), therefore
\[
(\beta-\alpha)\int_x T(x) \le \sum_i (M_i-m_i),
\]
as required.
\end{proof}

\section {Crossing a curve}

Now we want to get an upper bound for the number of crossings for the case where we cross some (polygonal) line instead of a set of vertical gates.

A simple trick reduces this case to the case of vertical gates. Indeed, if the trajectory with slope in $[\alpha,\beta]$ crosses a non-vertical segment $AB$ (with horizontal size $\Delta x$ and vertical size $\Delta y$), then it has to cross also the vertical segment $A'B$ that is a union of all intervals reachable (with slope in $[\alpha,\beta]$) from all points between $A$ and $B$.
\begin{center}
\includegraphics{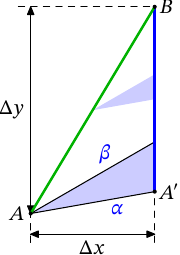}
\end{center}
 What is the length of this new interval? In our example $|A'B|=\Delta y - \alpha \Delta x$. In the general case, the length of the resulting vertical segment depends on the slope of $AB$ (whether it is less than $\alpha$, between $\alpha$ and $\beta$, or greater than $\beta$). Assuming $\Delta x>0$ (while $\Delta y$ can be positive or negative), we get the following answer for the length of the replacement vertical segment:
 \[
 \text{vertical length}=
 \begin{cases}
 \text{$\Delta y - \alpha\Delta x$, if $\Delta y/\Delta x \ge \beta$;}\\
 \text{$(\beta-\alpha)\Delta x$, if $\alpha \le \Delta y/\Delta x \le \beta$;}\\
 \text{$\beta\Delta x - \Delta y$, if $\Delta y/\Delta x \le \alpha$.}
 \end{cases} 
 \]
Note that the dependence on $\Delta y$ is continuous, and any of the two expressions can be used in border cases. Another way to say the same thing: the length of the replacement vertical segment is $\tau_{\alpha,\beta}(\Delta y/\Delta x)\Delta x$, where  $\tau_{\alpha,\beta}$ is the following function:
\begin{center}
\includegraphics{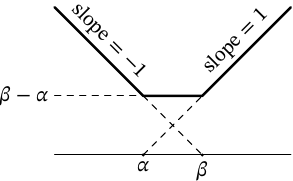}
\end{center}

Now we may consider a polygonal curve $y=y(x)$ for $x\in [l,r]$ as a union of segments, and define $t(x,y)$ as the minimal number of intersections with the curve  for a player who starts at $x,y$ and keeps her slope in $[\alpha,\beta]$. Then $T(x)$ is defined as $\max_y t(x,y)$ (as before). Replacing each segment by a vertical one as described, we get the following bound:
\begin{theorem}[Curve crossing inequality]\label{curve-crossing}
\[
 \int_{\mathbb{R}} T(x)\, dx \le \frac{1}{\beta-\alpha}\int_l^r \tau_{\alpha,\beta}(y'(x))\, dx.
\]
\end{theorem}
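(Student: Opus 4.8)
The plan is to deduce Theorem~\ref{curve-crossing} from Theorem~\ref{cross} by replacing each edge of the polygonal curve with the vertical gate supplied by the segment-to-gate construction above, and then comparing crossing counts. First I would fix the decomposition of $y=y(x)$ into its linear pieces; the $i$th edge, spanning horizontal width $\Delta x_i>0$ and height $\Delta y_i$, is replaced by a vertical gate of length $\tau_{\alpha,\beta}(\Delta y_i/\Delta x_i)\,\Delta x_i$. Since $y'(x)$ is constant and equal to $\Delta y_i/\Delta x_i$ throughout the $i$th edge, the total gate length equals the right-hand integral \emph{exactly}:
\[
\sum_i \tau_{\alpha,\beta}\!\left(\frac{\Delta y_i}{\Delta x_i}\right)\Delta x_i=\int_a^b \tau_{\alpha,\beta}(y'(x))\,dx,
\]
so for a polygonal curve no limiting argument is needed at this stage.

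Next I would compare the crossing counts. Write $T^{\mathrm c}$ for the function built from the curve and $T^{\mathrm g}$ for the one built from the gate system. The construction guarantees that every trajectory of admissible slope (slope in $[\alpha,\beta]$) crossing an edge must also cross the corresponding gate; contrapositively, a trajectory missing a gate misses the matching edge. Fixing a start $(x,y)$ and taking a trajectory $\gamma$ that crosses the fewest gates, i.e.\ $t^{\mathrm g}(x,y)$ of them, the contrapositive shows $\gamma$ can meet only those edges whose gates it crosses, so it meets at most $t^{\mathrm g}(x,y)$ distinct edges. Granting for the moment that $\gamma$ may be taken to cross each edge at most once, its total number of curve crossings is then at most $t^{\mathrm g}(x,y)$, whence $t^{\mathrm c}(x,y)\le t^{\mathrm g}(x,y)$ and, taking the maximum over $y$, $T^{\mathrm c}(x)\le T^{\mathrm g}(x)$ for all $x$. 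Integrating and applying Theorem~\ref{cross} to the gates gives
\[
\int_{\mathbb R} T^{\mathrm c}(x)\,dx\le\int_{\mathbb R} T^{\mathrm g}(x)\,dx\le\frac{1}{\beta-\alpha}\sum_i \tau_{\alpha,\beta}\!\left(\frac{\Delta y_i}{\Delta x_i}\right)\Delta x_i=\frac{1}{\beta-\alpha}\int_a^b \tau_{\alpha,\beta}(y'(x))\,dx,
\]
which is the assertion.

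The step I expect to be the main obstacle is the parenthetical ``each edge crossed at most once'', because an edge whose slope lies strictly between $\alpha$ and $\beta$ can a priori be met several times by one rightward trajectory (the gap $\gamma(x)-y(x)$ has derivative $\gamma'(x)-\text{slope}$ that may change sign), while a vertical gate is crossed at most once, so total crossings could outrun gate crossings. Here I would use that the curve is a \emph{graph}: its edges occupy disjoint $x$-intervals, so two consecutive crossings $x_1<x_2$ of a single edge lie inside that edge's interval, where no other edge is present. Replacing the arc of $\gamma$ on $[x_1,x_2]$ by one that runs just inside the edge with the edge's own (admissible) slope removes both crossings and, being confined to that $x$-interval, cannot create new crossings with any other edge. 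Iterating over edges yields an optimal trajectory crossing each edge at most once, which closes the gap and completes the proof.
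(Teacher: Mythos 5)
Your proof is correct and follows essentially the same route as the paper: replace each edge of the polygonal curve by the vertical gate of length $\tau_{\alpha,\beta}(\Delta y_i/\Delta x_i)\,\Delta x_i$ (so the total gate length is exactly the right-hand integral), observe that any admissible trajectory crossing an edge must cross its gate, and apply Theorem~\ref{cross}. The only difference is that you explicitly address the multiple-crossing subtlety (an edge whose slope lies in $[\alpha,\beta]$ may be met several times), a point the paper leaves implicit; your rerouting argument, confined to that edge's $x$-interval where no other edge can appear, settles it soundly.
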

Our argument applies directly to polygonal curves; we may use approximations to extend it to any curve that is sufficiently well-behaved, but this is not needed for the sequel.

If $\alpha$ and $\beta$ are close to zero, the function $\tau_{\alpha,\beta}$ is close to the absolute value function, so the right hand size is close to the variation of the function $y$. If $\alpha$ is close to $\beta$ (but they are not necessarily close to $0$), the right hand side is close to the variation of the function $y(x)-\alpha x$ (or $y(x)-\beta x$).

\section{Crossing a gap}\label{sec:gap-crossing}

We switched to curves because in this way we can get a bound for the up- and downcrossings. Let us explain what it means. Consider again some curve~$C$ (again we assume that $C$ is a graph of a piecewise linear function, this is enough for us), some point $(x,y)$ and slopes $\alpha<\beta$. We count how many times the curve crosses up and down the gap between slopes $\alpha$ and $\beta$, i.e., an angle formed by $\alpha$- and $\beta$-rays starting at $(x,y)$.
\begin{center}
\includegraphics{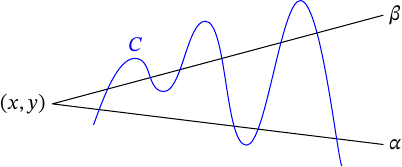}
\end{center}
In other words, we are in $(x,y)$ and watch the movement of a point along $C$ (from left to right). In this example (see picture) we see that initially the point is at an angle smaller than $\alpha$, then the angle becomes greater than $\beta$, then it goes down below $\beta$ but does not reach $\alpha$ and turns back exceeding $\beta$ again, then becomes less than $\alpha$, than again greater than $\beta$ and finally less than $\alpha$. We say that there are two \emph{upcrossings}  and two \emph{downcrossings} of the same gap (between $\alpha$ and $\beta$) in this picture.  The numbers of up- and downcrossings differ at most by~$1$ (since they alternate), and we consider the sum of them.

More formally, we consider sequences of points $c_0,\ldots,c_m$ on the curve (going from left to right) that alternate being below $\alpha$-ray and above $\beta$-ray, and take the maximal $m$ in all those sequences. This maximal $m$ is the number of crossings for the gap and is denoted $t(x,y)$. So for every fixed curve that is the graph of the function $y(x)$ for $x\in [a,b]$, and for every fixed $\alpha<\beta$ we get some non-negative integer function $t(x,y)$ that depends on the starting point $(x,y)$. Then we let $T(x)=\max_y t(x,y)$.

\begin{theorem}[Bishop's crossings inequality]\label{bishop}
\[
 \int_{\mathbb{R}} T(x)\, dx \le \frac{1}{\beta-\alpha}\int_a^b \tau_{\alpha,\beta}(y'(x))\, dx.
\]
\end{theorem}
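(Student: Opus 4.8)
The plan is to reduce Theorem~\ref{bishop} to the already-established Curve crossing inequality (Theorem~\ref{curve-crossing}), which has an identical right-hand side. The two theorems differ only in the definition of the quantity $t(x,y)$: in Theorem~\ref{curve-crossing} it counts geometric intersections of an optimal $[\alpha,\beta]$-slope trajectory with the curve, whereas in Theorem~\ref{bishop} it counts the length of the longest alternating below-$\alpha$/above-$\beta$ sequence of points on the curve as seen from $(x,y)$. Since the right-hand sides coincide, it suffices to show that the gap-crossing count never exceeds the geometric crossing count, i.e. that the new $t(x,y)$ is bounded by the old $t(x,y)$; integrating the pointwise bound and taking $\max_y$ then transfers the inequality.

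\medskip
First I would fix $(x,y)$ and take a maximal alternating sequence $c_0,\ldots,c_m$ realizing the gap-crossing number $t(x,y)$, where the points alternate between lying below the $\alpha$-ray from $(x,y)$ and above the $\beta$-ray from $(x,y)$. The key observation is that whenever two consecutive points $c_{k}$ and $c_{k+1}$ lie on opposite sides of the gap (one below the $\alpha$-ray, one above the $\beta$-ray), any continuous $[\alpha,\beta]$-slope trajectory emanating from $(x,y)$ is squeezed inside the cone between the $\alpha$- and $\beta$-rays, so it must separate these two points and hence cross the arc of the curve joining $c_k$ to $c_{k+1}$. Summing over the $m$ consecutive pairs shows that the optimal trajectory from $(x,y)$ crosses the curve at least $m$ times, which is exactly the statement that the old geometric $t(x,y)$ is at least the new gap-crossing $t(x,y)$.

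\medskip
The main obstacle is making precise the claim that an $[\alpha,\beta]$-slope trajectory starting at $(x,y)$ always stays inside the translated cone bounded by the $\alpha$- and $\beta$-rays, and that ``staying inside the cone'' genuinely forces a crossing between a point below the $\alpha$-ray and a point above the $\beta$-ray. Geometrically this is clear: a below-$\alpha$ point lies strictly below the trajectory's feasible region and an above-$\beta$ point lies strictly above it at the corresponding abscissa, so by the intermediate value theorem the trajectory must meet the curve in between. Care is needed to argue this uniformly along the whole alternating chain rather than for a single pair, and to handle the points of the chain that may lie to the left of $x$ as well as to the right; the convexity of the feasible cone is what lets the single optimal trajectory account for all $m$ alternations simultaneously.

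\medskip
Once this pointwise domination is in hand, the conclusion is immediate: taking the maximum over $y$ gives that the new $T(x)$ is bounded by the old $T(x)$ for every $x$, and integrating over $\mathbb{R}$ combined with Theorem~\ref{curve-crossing} yields
\[
 \int_{\mathbb{R}} T(x)\, dx \le \frac{1}{\beta-\alpha}\int_a^b \tau_{\alpha,\beta}(y'(x))\, dx,
\]
as claimed. No new integral estimate is required, so the entire content of the proof lies in the combinatorial-geometric domination of one crossing count by the other.
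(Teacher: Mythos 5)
Your proposal is correct and follows essentially the same route as the paper: the paper also derives Theorem~\ref{bishop} as an immediate corollary of Theorem~\ref{curve-crossing}, observing that any admissible trajectory from $(x,y)$ is confined to the gap and therefore must intersect the curve at least once for each part of it that crosses the gap, so the gap-crossing count is dominated by the geometric intersection count. Your write-up merely spells out in more detail (via the separation/intermediate-value argument along the alternating chain) what the paper states in one sentence.
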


This inequality is the same as in Theorem~\ref{curve-crossing}, and this result is an immediate corollary of that theorem. Indeed, any trajectory that goes from $(x,y)$ with slope between $\alpha$ and $\beta$, is confined in the gap and therefore has at least one intersection point for each part of the curve that crosses the gap.

Note that $t_{\alpha,\beta}(s) \le |s| +O(|\alpha|+|\beta|)$, and the integral of $|y'|$ is the variation of the function $y(x)$, therefore we get
\[
 \int_{\mathbb{R}} T(x)\, dx \le \frac{O(|\alpha|+|\beta|)(b-a)+\Var(y)}{\beta-\alpha}\eqno(*)
\]
and this will be enough for our application.

\section{Barmpalias--Lewis-Pye theorem}

The Bishop's crossing inequality implies (almost immediately) the following result of Barmpalias and Lewis-Pye~\cite[Theorem 1.7]{blp2017}; see also~\cite{miller2017}:

\begin{theorem}[Barmpalias, Lewis-Pye]\label{blp}
Let $a_n$ and $b_n$ be two increasing computable sequences of rational numbers that converge to real numbers $A$ and $B$. Assume that $A$ is Martin-L\"of random. Then the sequence
\[
\frac{B-b_n}{A-a_n}
\]
has a limit.
\end{theorem}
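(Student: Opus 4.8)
The plan is to read the ratio $\frac{B-b_n}{A-a_n}$ as the slope of the chord joining the limit point $(A,B)$ to the approximant $(a_n,b_n)$, and to feed the polygonal curve $C$ with vertices $(a_n,b_n)$ into Theorem~\ref{bishop}. Since $a_n$ and $b_n$ both increase (we may assume $a_n$ strictly increasing, so $C$ is a graph), $C$ is the graph of an increasing function $y(x)$ on $[a_0,A)$ approaching $(A,B)$, with total variation $\Var(y)=B-b_0<\infty$. Observed from the vantage point $(A,B)$, the chord to the vertex $(a_n,b_n)$ has slope exactly $\frac{B-b_n}{A-a_n}$, so the crossing number $t_{\alpha,\beta}(A,B)$ of the gap between slopes $\alpha$ and $\beta$ --- which counts the alternations of this slope between values $<\alpha$ and values $>\beta$ as the moving point runs along $C$ --- is at least the number of alternations of the sequence $\frac{B-b_n}{A-a_n}$ between $<\alpha$ and $>\beta$. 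Thus if this sequence fails to converge, then $\liminf<\limsup$; picking rationals $\liminf<\alpha<\beta<\limsup$ produces infinitely many alternations, so $t_{\alpha,\beta}(A,B)=\infty$.

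Accordingly, I would first show that for every fixed rational pair $\alpha<\beta$ the set $N_{\alpha,\beta}=\{x:T_{\alpha,\beta}(x)=\infty\}$ is Martin-L\"of null, where $T_{\alpha,\beta}(x)=\max_y t_{\alpha,\beta}(x,y)$ is the function of Theorem~\ref{bishop}. Since $A$ is random it belongs to no Martin-L\"of null set, hence $A\notin N_{\alpha,\beta}$ and $T_{\alpha,\beta}(A)<\infty$ for \emph{every} rational pair; as $t_{\alpha,\beta}(A,B)\le T_{\alpha,\beta}(A)$, the infinitely-crossing situation above is impossible, forcing $\liminf=\limsup$ and yielding the limit. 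Note that maximising over the vertical coordinate $y$ is exactly what lets us bound $t_{\alpha,\beta}(A,B)$ while keeping the test independent of the (only lower semicomputable) number $B$; and we never need the union over all pairs to be null, only that $A$ avoids each individual $N_{\alpha,\beta}$.

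Two technical pillars support this. First, $T_{\alpha,\beta}$ is lower semicomputable in $x$, uniformly in the rational parameters: writing $C_N$ for the finite subcurve through the first $N+1$ (rational, hence computable) vertices, the crossing counts increase with $N$, so $T_{\alpha,\beta}=\sup_N T^{(N)}_{\alpha,\beta}$, and for each fixed $N$ the condition $T^{(N)}_{\alpha,\beta}(x)\ge k$ is effectively open in $x$, crossings being witnessed by strict inequalities stable under small moves of the vantage point. Second, Theorem~\ref{bishop} gives
\[
\int_{\mathbb R} T_{\alpha,\beta}(x)\,dx\le\frac{1}{\beta-\alpha}\int_{a_0}^{A}\tau_{\alpha,\beta}(y'(x))\,dx\le\frac{(B-b_0)+O\bigl((|\alpha|+|\beta|)(A-a_0)\bigr)}{\beta-\alpha}<\infty ,
\]
using $\tau_{\alpha,\beta}(s)\le|s|+O(|\alpha|+|\beta|)$ and $y'\ge0$.

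I expect the passage from this finite integral to Martin-L\"of nullness to be the main obstacle. The Markov test $U_k=\{x:T_{\alpha,\beta}(x)>2^k\}$ is effectively open with $\mu(U_k)\le 2^{-k}\!\int T_{\alpha,\beta}$, but the integral is only a lower semicomputable real (both $A$ and $B$ are mere limits of computable sequences), so it cannot be normalised to a computable constant. The escape is that Martin-L\"of nullness asks only for the \emph{existence} of a test: for the genuine finite value $S=\int T_{\alpha,\beta}$ fix any integer $m\ge\log_2 S$; then $j\mapsto V_j:=U_{j+m}$ is a computable family of effectively open sets with $\mu(V_j)\le 2^{-(j+m)}S\le 2^{-j}$, hence a bona fide Martin-L\"of test, and $N_{\alpha,\beta}=\bigcap_k U_k=\bigcap_j V_j$. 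Since $m$ enters only as a fixed index shift, this certifies nullness of $N_{\alpha,\beta}$ without ever computing $S$, completing the argument.
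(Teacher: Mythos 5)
Your proposal is correct and follows essentially the same route as the paper: the polygonal curve through the points $(a_n,b_n)$, Bishop's crossing inequality (Theorem~\ref{bishop}) applied with the gap vertex at $(A,B)$ (maximizing over the second coordinate precisely so that the test does not depend on $B$), lower semicomputability of $T$ via finite subcurves, and the finite-integral bound forcing $T(A)<\infty$ at the random point $A$. The only difference is presentational: where the paper simply invokes the notion of an integral test rejecting $A$, you unfold that notion into an explicit Martin-L\"of test via Markov's inequality with a non-uniformly chosen index shift --- which is exactly the standard proof of that fact, not a different argument.
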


(See, e.g., \cite{usv2018} for the definition of Martin-L\"of randomness and basic results about monotone converging sequences of rational numbers and their limits, lower semicomputable reals.)

The argument below proves that there exists a finite or infinite limit, but in fact the second case is impossible (and this was known much before the result of  Barmpalias and Lewis-Pye, see~\cite{KuceraSlaman2001}; we reproduce the argument below in Section~\ref{history} for reader's convenience).

\begin{proof}[of Theorem~\ref{blp}]
If the sequence $(B-b_n)/(A-a_n)$ has no finite or infinite limit, there is a gap $\alpha<\beta$ that is crossed infinitely many times (there are infinitely many terms strictly below $\alpha$ and infinitely many terms strictly above $\beta$).   We need to show that $A$ is non-random.

Consider the curve connecting $(a_1,b_1)$, $(a_2,b_2),\ldots$ and converging to $(A,B)$. 
\begin{center}
\includegraphics[width=0.5\textwidth]{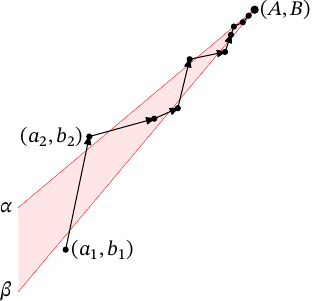}
\end{center}
We know that this curve has infinitely many crossings with the angle that has vertex $(A,B)$ and slopes $\alpha$ and $\beta$. Now we can apply Bishop's inequality (rotated by $180^\circ$): let $t(a,b)$ be number of crossings, considered as a function of the angle vertex $(a,b)$ and let $T(a)$ be the maximum of $t(a,b)$ over all $b$. Then $T(A)=+\infty$ while $\int_a T(a)$ is finite (the curve is monotone and bounded, so the variation is finite). It remains to note that function $T$ is lower semicomputable (for the finite part of the curve it is computable, and it increases when we add new segments to the curve), so we get an integral test that rejects $A$.
\end{proof}

Formally speaking, we proved Theorem~\ref{bishop} for finite polygonal curves and now we have infinite one. To avoid problems, we consider the bound for the finite part of the curve and note that all integrals are uniformly bounded. Then we apply the monotone convergence theorem.

\section{Remarks}\label{history}

\subsection*{$(B-b_n)/(A-a_n)$ is bounded}

For reader's convenience, let us provide a proof of a classical result from~\cite{KuceraSlaman2001} saying that $(B-b_n)/(A-a_n)$ is bounded (under the assumptions of Theorem~\ref{blp}). Assume that for every $\varepsilon>0$ there exists some $n$ such that $A-a_n < \varepsilon(B-b_n)$. Let us show how to get an effectively open cover for $A$ of arbitrarily small measure. 
\begin{center}
\includegraphics{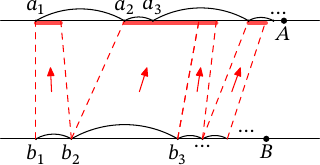}
\end{center}
For every interval $(b_i,b_{i+1})$ on the bottom line consider its ``counterpart'' on the top line, an interval whose length is multiplied by $\varepsilon$, i.e., equals $\varepsilon(b_{i+1}-b_i)$,  that starts from $b_i$ or from the right endpoint of the counterpart of the previous interval, \emph{whichever is bigger}.

The total length of all counterparts is at most $\varepsilon(B-b_1)$, and can be made arbitrarily small if $\varepsilon$ is small enough. On the other hand, they cover $A$ (so $A$ is non-random). Why? Since $A-a_n < \varepsilon(B-b_n)$ for some $n$, the counterparts of $(b_n,b_{n+1})$, $(b_{n+1}, b_{n+2}),\ldots$ cannot be all below $A$ (note that they are disjoint). So some of them should go above $A$. On the other hand, $A$ cannot get into the gaps between the counterparts, since all gaps are below some $a_i$, and $A$ is greater than all $a_i$. This finishes the argument.

\subsection*{Zero limit and non-random $B$}

If $B$ is random, the limit of the inverse fraction $(A-a_n)/(B-b_n)$ exists (and is finite, as we have seen), therefore both limits are non-zero. Therefore, if the limit $(B-b_n)/(A-a_n)$ is zero, then $B$ is non-random.

On the other hand, if $a_n$ and $b_n$ are computable increasing sequences converging to $A$ and $B$, and $A-a_n=\Theta(B-b_n)$ (i.e., $A-a_n$ and $B-b_n$ differ at most by some constant factor), then $A$ and $B$ are either both random or both non-random. This is another classical result that goes back to Solovay (see~\cite[Theorem 4.9]{CHKW2001}). The direct argument may go as follows. Let us assume that $A$ is non-random; we need to show that $B$ is also non-random. Assume that we have an enumeration of intervals that together cover $A$ and have small total measure. How can we cover $B$ by some other enumerable family of intervals with small measure? When some interval $(p,q)$ appears in the cover of $A$, we can wait until some $a_i$ turns out to be in this interval. (If this never happens, the interval does not cover $A$ and is useless.) If some $a_i$ appears in $(p,q)$, we take an interval of $c$ times bigger length, where $c$ is the constant in $B-b_i=O(A-a_i)$, with left endpoint $b_i$. We do not know whether $(p,q)$ covers $A$, but if it does, then the new interval covers $B$ (due to the choice of $c$). Since some interval in the enumeration covers $A$ by assumption, the new family covers $B$ and it at most $c$ times longer. This shows that if $B-b_i=O(A-a_i)$ and $A$ is non-random, then $B$ is non-random, too. 

These arguments show that the limit in Theorem~\ref{blp} is always finite and equals zero if and only if $B$ is non-random.

\subsection*{Only  $A$ and $B$ matter}

One can note that the limit in Theorem~\ref{blp} does not depend on the choice of computable increasing sequences that converge to $A$ and $B$ (and depends on $A$ and $B$ only). Indeed, for non-random $B$ this limit is always $0$. For random $B$ it is enough to show that two increasing sequences $a_n$ and $a_n'$ that converge to the same random $A$, have the same convergence speed, i.e. $(A-a_n)/(A-a_n')$ converges to $1$.  Indeed, we know already that the limit exists, assume that it is not $1$. Then one sequence converges faster than the other; we may assume without loss of generality that $A-a_n' < c(A-a_n)$ for some $c<1$ and for all $n$ (discarding some initial parts of the sequences). This implies that $A$ is computable. Indeed, if we know some approximation $a$ to $A$ from below, we can wait until $a_i$ becomes greater than $a$. Then $a_i'$ is a better approximation to $A$ that $a$ (by factor $c$). Repeating this trick, we can compute $A$ with arbitrary precision.

\subsection*{Extensions: arbitrary random reals, finite variation functions}

Ivan Titov~\cite{titov2024} proved the following generalization of the Barmpalias--Lewis-Pye result where $A$ is not assumed to be a left-c.e. real:

\begin{theorem}[Titov]
Let $A$ be a Martin-L\"of random real, and let $g$ be a partial computable non-decreasing bounded function that is defined on all rational numbers smaller than $A$ \textup(and, maybe, on some other rational numbers\textup). Let $\displaystyle B=\lim_{r\to A-0} g(r)$. Then the ratio
\[
\frac{B-g(r)}{A-r} 
\]
has limit as $r\to A-0$.
\end{theorem}

This result can be obtained as a corollary of crossing inequalities in the same way. The only difference is here we have to deal not with a polygonal line, but with a graph of a partial function defined on a countable set.  But the definitions of the number of crossings and the variation can be naturally extended to this case, and our main tool, the inequality $(*)$ (see the end of Section~\ref{sec:gap-crossing}) remain valid for this case. Indeed, it is true for a function with finite domain (we can connect the dots by a polygonal line), and then we can take the limit and prove it for countable domains. 

Note that in this way we prove only that finite or infinite limit exists (and to exclude an infinite limit another argument is needed; it is provided in~\cite{titov2024}). 

This argument extends (without any changes) to functions $g$ that have finite variation (but are not necessarily monotone) --- a result proven by Titov in another paper~\cite[Theorem 20]{titov2025}.

\subsection*{Historical remarks}

Two results (of Bishop and Barmpalias--Lewis-Pye) we are reproving here have a quite complicated history.
The upcrossing inequality appeared in Bishop's paper~\cite{bishop1966} in slightly different, but essentially equivalent form; Lemma~1 in this paper corresponds to our Theorem~\ref{cross}, and Theorem~1 in Bishop's paper roughly corresponds to Theorem~\ref{bishop}. The main technical innovation in our proof is to introduce the ``anti-slalom'' game that provides an upper bound for the number of upcrossings and is easier to analyze than the number of upcrossings itself.

Bishop used these results to provide a new proof of Birkhoff ergodic theorem (a more constructive one --- he was interesting in constructive versions of different mathematical results, and this was his main motivation to revisit the ergodic theorem).  In his book~\cite{bishop1966} he reproves the constructive ergodic theorem, this time essentially using some other (and somehow weaker) version of the upcrossing inequality. The difference is that in this weaker version we consider not the maximum of $t(x,y)$ for given $x$ and for all $y$, but consider only points $(x,y)$ on the curve. This is enough for Bishop's proof of ergodic theorem, but not for our application. Finally, Bishop~\cite{bishop1968} revisits ergodic theorem in more general setting, but does not state explicitly any version of upcrossing inequality for plane curves.

The upcrossing inequalities already appeared in algorithmic randomness theory for a completely different reason: V'yugin used them in~\cite{vyugin1998} to prove the ergodic theorem for Martin-L\"of random sequences (under some computability assumptions on the measure). For this he used the weaker version of upcrossing inequality from~\cite{bishop1967}. In fact, neither Bishop in~\cite{bishop1967} nor V'yugin in~\cite{vyugin1998} explicitly use this weaker version of the upcrossing inequality; instead, they incorporate its proof in the proof of the ergodic theorem, but all the tools are there. See~\cite{shen-ergodic} for the details (the weak version of the inequality appears there as Lemma~1, and the Bishop--V'yugin potential argument is reproduced in Section~3). Note also that a bound for crossings appears also in the proof of the martingale convergence theorem.

Theorem~\ref{blp} was proven by Barmpalias and Lewis-Pye~\cite{blp2017} (without any connection to ergodic theorem for Martin-L\"of randomness or upcrossing inequalities). It appears in their paper as Theorem~1.7, and the crucial step in the proof is Lemma~2.1 that says essentially that the number of $(\alpha,\beta)$-crossings is finite for every gap $(\alpha,\beta)$. This Lemma is proven in Section~3 using quite complicated construction with movable markers.

Another proof of Theorem~\ref{blp} was given by Miller~\cite{miller2017}; as the author states, one of the main purposes of his note is ``to give relatively short, self-contained proofs of the results of Barmpalias and Lewis-Pye''. The statement of Theorem~\ref{blp} appears as Lemma~1.2 there, and the proof is relatively short (though a bit technical). Of course, this is a matter of taste, but we hope that our proofs are more intuitive and maybe even can be considered as ``proofs from The Book''. The other results mentioned in this section are also well known; we included them only to provide easy direct proofs.

\section*{Acknowledgements}

The authors thank Ivan Titov, Laurent Bienvenu and Wolfgang Merkle who tried hard to explain to us the results about convergence speed for lower semicomputable reals. Ivan Titov kindly informed us that some of his results also follow from crossing inequalities. We also thank Ilya Brodsky without whom this paper could not appear. We are grateful to the anonymous referees of our submission to CiE 2026 conference whose critical remarks helped us to provide more historical context and make clear that we just reproduce the proofs of known results for readers' convenience (and to correct language errors).

During the preparation of this paper one of us (A.~Shen) was supported by ANR-21-CE48-0023 FLITTLA  and  ANR-24-CE48-3758 CADO grants. Part of the work was done while visiting Neapolis University, Paphos, Cyprus.

This paper is dedicated to the memory of our late colleague Vladimir V'yugin, a great mathematician and a kind man, who first used Bishop's upcrossing inequalities in the algorithmic randomness theory.

\end{document}